\newtheorem{conjecture}{Conjecture}
\newtheorem*{lemma*}{Lemma}
\newtheorem*{proposition*}{Proposition}
\newtheorem*{theorem*}{Theorem}
\newcommand{\R}{\mathbb{R}}
\newcommand{\Z}{\mathbb{Z}}
\newcommand{\Mod}{\textnormal{mod}}
\newcommand{\Frac}{\textnormal{frac}}
\begin{document}

\title{Two binomial coefficient conjectures}
\author{Eric Rowland}

\address{
	Mathematics Department \\
	Tulane University \\
	New Orleans, LA 70118, USA
}
\date{February 7, 2011}

\begin{abstract}
Much is known about binomial coefficients where primes are concerned, but considerably less is known regarding prime powers and composites.
This paper provides two conjectures in these directions, one about counting binomial coefficients modulo $16$ and one about the value of $\binom{n}{2p}$ modulo $n$.
\end{abstract}

\dedicatory{
To Doron Zeilberger in honor of his 60th birthday!
}

\maketitle

\section{Introduction}\label{Introduction}

On the occasion of Doron Zeilberger's sixtieth birthday I present two conjectures on arithmetic properties of binomial coefficients.
These conjectures epitomize the celebrated Zeilbergerian experimental approach to mathematics.
They began as naive questions.
Answers to these questions were computed (by a machine) in many specific instances, and the data was analyzed (by a human with the assistance of a machine) for patterns.
From such experiments, and in the case of the second conjecture after a fair amount of work, the author formulated and simplified the statements of the conjectures, which were then verified against additional data.

Perhaps most importantly, the conjectures given in this paper are empirical claims and are unproven!
We do however condescend to prove some propositions.

Accompanying these two conjectures are two stories.
Arithmetic properties of binomial coefficients have a long and interesting history, a major theme of which is that properties of $\binom{n}{m}$ involving the prime $p$ are related to the standard base-$p$ representations $n_l n_{l-1} \cdots n_1 n_0$ and $m_l m_{l-1} \cdots m_1 m_0$ of $n$ and $m$.
The theorems of Kummer~\cite[pages~115--116]{Kummer} and Lucas~\cite{Lucas} are important examples of this relationship.

\begin{theorem*}[Kummer]
Let $p$ be a prime, and let $0 \leq m \leq n$.
The exponent of the highest power of $p$ dividing $\binom{n}{m}$ is the number of borrows involved in subtracting $m$ from $n$ in base $p$.
\end{theorem*}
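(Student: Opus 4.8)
The plan is to reduce the statement to a $p$-adic valuation computation via Legendre's formula and then to reinterpret the resulting quantity combinatorially. Writing $v_p$ for the exponent of the highest power of $p$ dividing an integer, the starting point is Legendre's identity
\[
v_p(N!) = \sum_{i \geq 1} \left\lfloor \frac{N}{p^i} \right\rfloor = \frac{N - s_p(N)}{p - 1},
\]
where $s_p(N)$ denotes the sum of the base-$p$ digits of $N$. I would prove this first, since everything downstream rests on it; the floor-sum form follows from counting how many of $1, 2, \dots, N$ are divisible by each $p^i$, and the digit-sum form is then an easy algebraic rearrangement.

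Next, setting $k = n - m$ and using $v_p\bigl(\binom{n}{m}\bigr) = v_p(n!) - v_p(m!) - v_p(k!)$, the digit-sum form collapses the leading $N$-terms (because $n = m + k$) and leaves
\[
v_p\left(\binom{n}{m}\right) = \frac{s_p(m) + s_p(k) - s_p(m + k)}{p - 1}.
\]
It then remains to show that this counts carries. The cleanest rigorous route is to return to the floor-sum form and check that for each $i \geq 1$ the summand $\lfloor n/p^i \rfloor - \lfloor m/p^i \rfloor - \lfloor k/p^i \rfloor$ equals exactly the carry propagated out of digit position $i - 1$ when $m$ and $k$ are added in base $p$. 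Indeed, writing $m = a\,p^i + r$ and $k = b\,p^i + s$ with $0 \leq r, s < p^i$, the relation $n = m + k$ forces this difference to be the single bit $\varepsilon \in \{0, 1\}$ determined by $r + s = (n \bmod p^i) + \varepsilon\, p^i$. Summing over all $i$ therefore tallies the total number of carries.

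The main obstacle is precisely the bookkeeping in this last step: making rigorous that the floor summand is a clean $0/1$ carry indicator (equivalently, that each carry lowers the combined digit sum $s_p(m) + s_p(k)$ by exactly $p - 1$) and that no carry position is miscounted. Once that identity is nailed down, the final translation is immediate, since the problem is stated in terms of borrows rather than carries: a carry out of position $i - 1$ in the addition $m + k = n$ occurs exactly when a borrow is needed at position $i - 1$ in the inverse subtraction $n - m = k$. Hence the number of carries equals the number of borrows, which is the claimed exponent.
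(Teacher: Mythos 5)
The paper does not prove this statement: Kummer's theorem is quoted as a classical result (with a citation to Kummer's 1852 paper) and used later only through the lemma $\nu_p\bigl(\binom{pn}{pm}\bigr) = \nu_p\bigl(\binom{n}{m}\bigr)$, so there is no in-paper argument to compare yours against. Your proposal is the standard proof and it is correct and complete in outline: Legendre's formula $v_p(N!) = \sum_{i \geq 1} \lfloor N/p^i \rfloor = \frac{N - s_p(N)}{p-1}$, the resulting identity $v_p\bigl(\binom{n}{m}\bigr) = \frac{s_p(m) + s_p(k) - s_p(n)}{p-1}$ with $k = n - m$, and the verification that each summand $\lfloor n/p^i \rfloor - \lfloor m/p^i \rfloor - \lfloor k/p^i \rfloor$ is the carry bit into position $i$. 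Your decomposition $m = a p^i + r$, $k = b p^i + s$ with $0 \leq r, s < p^i$ does nail down the one point you flag as the obstacle: since $0 \leq r + s < 2p^i$, the bit $\varepsilon$ in $r + s = (n \bmod p^i) + \varepsilon p^i$ is well defined, equals the floor-sum discrepancy, and equals $1$ exactly when the low-order parts overflow, which is the usual characterization of a carry out of position $i-1$. The final translation is also sound: $\varepsilon = 1$ at position $i$ holds iff $n \bmod p^i < m \bmod p^i$, which is precisely the condition for a borrow at that position in computing $n - m$, so carries in $m + k = n$ biject with borrows in $n - m = k$. This route (Legendre plus digit sums) is arguably more informative than Kummer's original, since the intermediate identity $v_p\bigl(\binom{n}{m}\bigr) = \frac{s_p(m) + s_p(k) - s_p(n)}{p-1}$ is independently useful --- for instance it immediately yields the lemma the paper does use, because multiplying $n$ and $m$ by $p$ shifts all base-$p$ digits without changing any digit sum.
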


\begin{theorem*}[Lucas]
If $p$ is a prime and $0 \leq m \leq n$, then
\[
	\binom{n}{m} \equiv \prod_{i=0}^l \binom{n_i}{m_i} \mod p.
\]
\end{theorem*}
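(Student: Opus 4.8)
The plan is to prove the congruence by comparing coefficients in the polynomial identity obtained from $(1+x)^n$ modulo $p$. The engine of the argument is the ``freshman's dream'' in characteristic $p$: since $\binom{p}{j} \equiv 0 \pmod{p}$ for $0 < j < p$, the binomial theorem gives $(1+x)^p \equiv 1 + x^p \pmod{p}$, and iterating this identity yields $(1+x)^{p^i} \equiv 1 + x^{p^i} \pmod{p}$ for every $i \geq 0$.

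First I would write $n$ in base $p$ as $n = \sum_{i=0}^l n_i p^i$ and factor
\[
	(1+x)^n = \prod_{i=0}^l \bigl((1+x)^{p^i}\bigr)^{n_i} \equiv \prod_{i=0}^l (1 + x^{p^i})^{n_i} \pmod{p},
\]
where the congruence is understood coefficientwise for polynomials in $\Z[x]$. Next I would expand each factor by the ordinary binomial theorem, $(1+x^{p^i})^{n_i} = \sum_{m_i=0}^{n_i} \binom{n_i}{m_i} x^{m_i p^i}$, so that the coefficient of $x^m$ on the right-hand side is the sum of $\prod_{i=0}^l \binom{n_i}{m_i}$ over all tuples $(m_0, \dots, m_l)$ with $0 \leq m_i \leq n_i$ and $\sum_{i=0}^l m_i p^i = m$.

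The key step is to observe that because each digit satisfies $0 \leq n_i \leq p-1$, any such tuple has $0 \leq m_i \leq p-1$, so the condition $\sum_{i=0}^l m_i p^i = m$ forces $(m_0, \dots, m_l)$ to be exactly the sequence of base-$p$ digits of $m$, by uniqueness of the base-$p$ representation. Hence a single tuple survives, and the coefficient of $x^m$ on the right is precisely $\prod_{i=0}^l \binom{n_i}{m_i}$. Comparing with the coefficient of $x^m$ on the left, which is $\binom{n}{m}$, completes the proof.

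I expect the only subtle point to be this uniqueness argument, and in particular verifying that it is the digit constraints $m_i \leq n_i \leq p-1$ that collapse the sum to one term: were some $m_i$ allowed to exceed $p-1$, carries would produce several competing contributions to the coefficient of $x^m$. Everything else is routine manipulation of generating functions.
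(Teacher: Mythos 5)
Your argument is correct, and it is the standard generating-function proof of Lucas's theorem. Note that the paper itself offers no proof to compare against: it states the result as a classical theorem with a citation to Lucas's 1878 paper, so your write-up supplies a proof where the paper supplies only a reference. The one point you should make fully explicit is the degenerate case: if some base-$p$ digit of $m$ exceeds the corresponding digit of $n$, then no tuple $(m_0, \dots, m_l)$ with $0 \leq m_i \leq n_i$ and $\sum_{i=0}^l m_i p^i = m$ exists, so the coefficient of $x^m$ on the right-hand side is an empty sum, equal to $0$. This is still consistent with the asserted congruence, because under the usual convention $\binom{a}{b} = 0$ for $b > a$ the product $\prod_{i=0}^l \binom{n_i}{m_i}$ also vanishes at the offending index --- so the theorem holds with both sides zero, but your phrase ``a single tuple survives'' silently assumes the digit condition $m_i \leq n_i$ holds for all $i$. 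With that case separated out, your uniqueness argument (that the constraints $0 \leq m_i \leq n_i \leq p-1$ prevent carries and force the tuple to be the base-$p$ digit string of $m$) is exactly the right key step, and the rest is, as you say, routine.
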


The relationship between $\binom{n}{m}$ and the base-$p$ representations of $n$ and $m$ can be thought of as a consequence of the self-similarity present in both Pascal's triangle and base-$k$ representations of nonnegative integers.
Figure~\ref{mod powers of 2} shows Pascal's triangle modulo powers of $2$.

Moreover, this relationship provides a natural rendering of number theoretic questions about binomial coefficients in terms of combinatorial questions about words.
We shall use the notation $|n|_w$ to denote the number of occurrences of the word $w$ in the base-$p$ representation $n_l n_{l-1} \cdots n_1 n_0$, where $p$ is the prime implied by context.

\begin{figure}
	\includegraphics{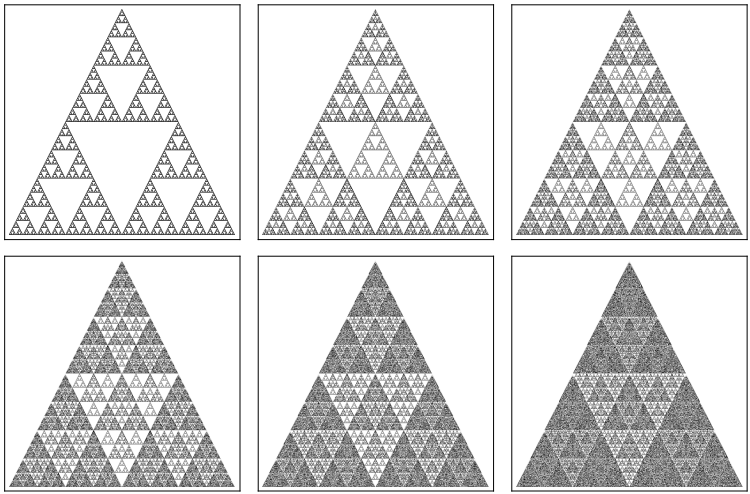}
	\caption{Pascal's triangle modulo $2^\alpha$ for $1 \leq \alpha \leq 6$, in which nested structure can be seen.}
	\label{mod powers of 2}
\end{figure}

Both conjectures in the paper are concerned with the values of $\binom{n}{m}$ modulo $k$.
The first conjecture concerns counting binomial coefficients $\binom{n}{m}$ equivalent to a certain residue class modulo $16$, for fixed $n$.
The second conjecture concerns the value of $\binom{n}{2p}$ modulo $n$.

\section{Counting entries modulo $k$}

Counting binomial coefficients is a sort of ``meta-combinatorics'' --- we are counting the counters!
One basic question we can ask about Pascal's triangle modulo $k$ is how many nonzero entries there are on row $n$.
This question was answered for prime $k = p$ by Fine~\cite{Fine} and shown to be $\prod_{i=0}^l \left(n_i + 1\right)$.
For prime powers $k = p^\alpha$ an expression was found by the author~\cite{Rowland}.
There has been no progress in the case of composite $k$.

Here we shall be interested in a refinement of the previous question:  Let $a_{k,r}(n)$ be the number of integers $0 \leq m \leq n$ such that $\binom{n}{m} \equiv r \mod k$; what is the structure of $a_{k,r}(n)$?

The first result on $a_{k,r}(n)$ was Glaisher's 1899~\cite{Glaisher} discovery that $a_{2,1}(n) = 2^{|n|_1}$.
After that there appears to have been no results until 1978, when Hexel and Sachs~\cite[\S5]{Hexel--Sachs} determined a formula for $a_{p,r^i}(n)$ for any prime $p$ in terms of $(p-1)$th roots of unity, where $r$ is a primitive root modulo $p$.
From this they derived
\begin{align*}
	a_{3,1}(n) &= 2^{|n|_1 - 1} \cdot \left(3^{|n|_2} + 1\right) \\
	a_{3,2}(n) &= 2^{|n|_1 - 1} \cdot \left(3^{|n|_2} - 1\right)
\end{align*}
as well as explicit formulas for $a_{5,r^i}(n)$ in terms of $|n|_1$, $|n|_2$, $|n|_3$, and $|n|_4$.
Garfield and Wilf~\cite{Garfield--Wilf} exhibited a method to compute the generating function $\sum_{i=0}^{p-2} a_{p,r^i}(n) x^i$, where again $r$ is a primitive root.

Recently, Amdeberhan and Stanley~\cite[Theorem~2.1]{Amdeberhan--Stanley} studied the number of coefficients equal to $r$ in the $n$th power of a general multivariate polynomial over a finite field, where $r$ is an invertible element of the field.
They showed that if $f(\textbf{x}) \in \mathbb{F}_q[x_1, \dots, x_m]$ then the number $a(n)$ of coefficients in $f(\textbf{x})^n$ equal to $r \in \mathbb{F}_q^\times$ is a $q$-regular sequence, meaning that it satisfies linear recurrences in $a(q^e n + i)$~\cite{Allouche--Shallit}.

The first results for a non-prime modulus were formulas for $a_{4,r}(n)$ given by Davis and Webb~\cite{Davis--Webb 1989}:

\begin{align*}
	a_{4,1}(n) &=
		\begin{cases}
			2^{|n|_1}	& \text{if $|n|_{11} = 0$} \\
			2^{|n|_1 - 1}	& \text{otherwise}
		\end{cases} \\
	a_{4,2}(n) &= 2^{|n|_1 - 1} \cdot |n|_{10} \\
	a_{4,3}(n) &=
		\begin{cases}
			0	& \text{if $|n|_{11} = 0$} \\
			2^{|n|_1 - 1}	& \text{otherwise}
		\end{cases}
\end{align*}
(where $|n|_w$ counts occurrences of $w$ in the base-$2$ representation of $n$).
In particular, if $r$ is odd then $a_{4,r}(n)$ is either $0$ or a power of $2$.

Granville observed that in a sense the statement that $a_{4,r}(n)$ is either $0$ or a power of $2$ parallels Glaisher's result that $a_{2,1}(n) = 2^{|n|_1}$ is (either $0$ or) a power of $2$.
Granville asked whether a similar statement is true modulo $8$.
His delightful paper ``Zaphod Beeblebrox's brain and the fifty-ninth row of Pascal's triangle''~\cite{Granville 1992} contains the following theorem.

\begin{theorem*}[Granville]
If $r$ is odd then $a_{8,r}(n)$ is either $0$ or a power of $2$.
\end{theorem*}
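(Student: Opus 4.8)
The plan is to reduce the statement to a linear-algebra fact over $\mathbb{F}_2$. First I would dispose of the even part: since $r$ is odd, only $m$ with $\binom{n}{m}$ odd contribute, and by Kummer's theorem (equivalently Lucas) these are exactly the $2^{|n|_1}$ integers $m$ whose binary digits form a submask of those of $n$ (that is, $m_i \le n_i$ for all $i$, so that $m + (n-m) = n$ involves no carries). Write $\bar m = n - m$ for the complementary submask. Thus $a_{8,r}(n)$ counts submasks landing on a prescribed value in the unit group $(\mathbb{Z}/8\mathbb{Z})^\times = \{1,3,5,7\}$. The structural feature of the modulus $8$ that I would exploit is that this group is elementary abelian: every unit satisfies $u^2 \equiv 1$, so $(\mathbb{Z}/8\mathbb{Z})^\times \cong (\mathbb{Z}/2\mathbb{Z})^2$. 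Fixing the isomorphism $u = (-1)^a 5^b \mapsto (a,b)$, it suffices to understand the map $\Phi$ sending a submask $m$ to the pair $(a,b)$ representing $\binom{n}{m} \bmod 8$.

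Next I would establish a Lucas-type product formula modulo $8$. Using Legendre's formula to strip the (vanishing) power of $2$ and grouping the binary digits into the overlapping width-$3$ windows $N_j = \lfloor n/2^j \rfloor \bmod 8$, $M_j = \lfloor m/2^j \rfloor \bmod 8$, and $R_j = \lfloor \bar m/2^j \rfloor \bmod 8$, one gets $\binom{n}{m} \equiv \prod_{j \ge 0} g(N_j)\, g(M_j)\, g(R_j) \pmod 8$, where $g(t) = (t!)_2 \bmod 8$ is the product of the odd numbers up to $t$; here I have used that every unit is self-inverse to turn the quotient of factorials into a product, and that the absence of carries forces $M_j + R_j = N_j$ in each window. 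Passing to additive coordinates, $\Phi(m) = \sum_j \bigl(\gamma(N_j) + \gamma(M_j) + \gamma(R_j)\bigr)$, where $\gamma(t) = (\sigma(t), \tau(t))$ with $\sigma(t) = t_2 + t_1 t_0$ and $\tau(t) = t_2 + t_1 t_0 + t_2 t_1 + t_2 t_0$ in $\mathbb{F}_2$.

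A priori $\Phi$ is only a quadratic polynomial in the free bits of $m$, so the heart of the proof --- and the step I expect to be the main obstacle --- is to show that $\Phi$ is in fact $\mathbb{F}_2$-\emph{affine}. I would prove this by tracking the quadratic monomials through the window sum. Two cancellations conspire: products of two \emph{adjacent} bits of $m$ occur in two consecutive windows, hence with coefficient $2 \equiv 0$; and the remaining bit-products survive only through the combination $\Phi(m) = \text{const} + \Psi(m) + \Psi(\bar m)$, where the substitution $\bar m_i = n_i - m_i$ turns each $m_a m_b + \bar m_a \bar m_b$ into $n_a n_b + n_a m_b + n_b m_a$, i.e. a constant plus a linear form. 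After collecting terms one finds explicitly that the mod-$4$ coordinate is the linear form $a(m) = \sum_i (n_{i+1} + n_{i-1})\, m_i$ (recovering the Davis--Webb count), and the mod-$8$ correction is likewise affine, with linear part $\sum_i (n_{i+2} + n_{i-2} + n_1 \delta_{i,0} + n_0 \delta_{i,1})\, m_i$.

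Once $\Phi \colon \mathbb{F}_2^{\,|n|_1} \to (\mathbb{Z}/2\mathbb{Z})^2$ is known to be affine, the theorem is immediate: its nonempty fibers are the cosets of the kernel of its linear part, so they are equinumerous of size $2^{|n|_1 - \rho}$ with $\rho = \operatorname{rank} \in \{0,1,2\}$. Hence for odd $r$ the count $a_{8,r}(n)$ is either $0$ (if $r$ is not in the image) or one of $2^{|n|_1}, 2^{|n|_1 - 1}, 2^{|n|_1 - 2}$, a power of $2$. I would close by remarking that this argument is special to the modulus $8$: it relies on $(\mathbb{Z}/8\mathbb{Z})^\times$ being elementary abelian, so that values combine additively and affineness is even meaningful, whereas $(\mathbb{Z}/16\mathbb{Z})^\times \cong \mathbb{Z}/2\mathbb{Z} \times \mathbb{Z}/4\mathbb{Z}$ is not --- which is precisely where the behaviour becomes subtle and motivates the paper's first conjecture.
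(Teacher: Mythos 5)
The first thing to say is that this paper contains no proof of this statement for you to be compared against: Granville's theorem is quoted with a citation to his 1992 Monthly article, and the paper itself remarks that Granville's published analysis was long and in fact incomplete (citing the 1997 correction and the Huard--Spearman--Williams paper). So your proposal must stand on its own, and in my judgment it does. Your window-product formula modulo $8$ is the $p=2$, $q=3$, carry-free case of the Davis--Webb/Granville ``Lucas theorem for prime powers'' (the sign appearing in the general statement is trivial when there are no carries, which is exactly your situation since $\binom{n}{m}$ is odd), and your table for $\gamma$ checks out: writing units as $(-1)^a 5^b$ one indeed gets $\sigma(t) = t_2 + t_1 t_0$ and $\tau(t) = t_2 + t_1 t_0 + t_2 t_1 + t_2 t_0$ on all eight residues. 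One imprecision is worth flagging in the linearization step: the claim that adjacent-bit products cancel between consecutive windows holds only in the $\tau$-coordinate and only away from the boundary --- the monomial $x_1 x_0$ survives once, and in the $\sigma$-coordinate adjacent products occur once per window, not twice. This does not damage the argument, because your second mechanism alone kills every quadratic monomial: each survivor appears in the combination $\Psi(m) + \Psi(\bar m)$, and the substitution $\bar m_i = n_i + m_i$ turns $m_a m_b + \bar m_a \bar m_b$ into a constant plus a linear form; your explicit linear parts, including the boundary corrections $n_1 \delta_{i,0} + n_0 \delta_{i,1}$ (which are precisely the surviving $x_1 x_0$ term, so you did account for it), are correct, and the mod-$4$ form $\sum_i (n_{i+1} + n_{i-1}) m_i$ does recover the Davis--Webb formulas quoted in the paper. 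Once $\Phi$ is affine, the fibers are empty or cosets of the kernel of its linear part and the dichotomy is immediate. What your route buys over Granville's original treatment is brevity and robustness: the ``$0$ or a power of $2$'' statement falls out of pure $\mathbb{F}_2$-linear algebra without computing the exact counts (which is where the published errors occurred), and your closing observation --- that everything hinges on $(\mathbb{Z}/8\mathbb{Z})^\times$ being elementary abelian while $(\mathbb{Z}/16\mathbb{Z})^\times$ is not --- is exactly the structural boundary that this paper's Conjecture~1 probes.
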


As in the case of $a_{4,r}(n)$, for even $r$ other values occur.
For example, $a_{8,4}(12) = 5$ and $a_{8,0}(16) = 12$.

Granville goes on to consider the number of binomial coefficients congruent to an odd residue class $r$ modulo $16$, and he discovers that the analogous statement fails for $n = 59$!
Granville writes: ``Unbelievably, there are exactly six entries of Row $59$ in each of the congruence classes $1$, $11$, $13$, and $15 \pmod {16}$! Our pattern has come to an end, but not before providing us with some interesting mathematics, as well as a couple of pleasant surprises.''

But this isn't the end of the story!
We are now lead to ask:  What values \emph{does} $a_{16,r}(n)$ take for odd $r$?
Explicit computation shows that up to row $2^{20}$ the set of values that occur is
\[
	\{0, 1, 2, 4, 6, 8, 12, 16, 20, 24, 32, 40, 48, 56, 64, 72, 80, 96, \dots, 65536\}.
\]
This computation took nearly three CPU weeks to complete.
It is clear that the only odd number that occurs is $1$, since $\binom{2n}{n}$ is even for $n \geq 1$, so $a_{16,r}(n)$ is even.
But the data suggests that $a_{16,r}(n)$ does not take on every even number.

\begin{conjecture}
Fix $n$ and odd $r$.

If $a_{16,r}(n)$ is divisible by $\phantom{0}3$, then it is also divisible by $2$.

If $a_{16,r}(n)$ is divisible by $\phantom{0}5$, then it is also divisible by $2^2$.

If $a_{16,r}(n)$ is divisible by $\phantom{0}7$, then it is also divisible by $2^3$.

If $a_{16,r}(n)$ is divisible by $11$, then it is also divisible by $2^5$.

If $a_{16,r}(n)$ is divisible by $13$, then it is also divisible by $2^6$.

If $a_{16,r}(n)$ is divisible by $17$, then it is also divisible by $2^4$.

If $a_{16,r}(n)$ is divisible by $31$, then it is also divisible by $2^5$.
\end{conjecture}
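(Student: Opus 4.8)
Since $r$ is odd, only odd binomial coefficients contribute, so by Kummer's theorem the relevant $m$ are exactly the \emph{submasks} of $n$ (those with $m_i \le n_i$ for all $i$), of which there are $2^{|n|_1}$. Thus $a_{16,r}(n) = \#\{m : m \preceq n,\ \binom{n}{m} \equiv r \mod{16}\}$, and the plan is to extract this count by harmonic analysis on the group $G = (\Z/16\Z)^\times$. This group has order $8$ and is isomorphic to $\Z/2 \times \Z/4$, so its characters take values in the fourth roots of unity $\mu_4 = \{1, i, -1, -i\}$. First I would write
\[
	a_{16,r}(n) = \frac{1}{8} \sum_{\chi \in \hat{G}} \overline{\chi(r)}\, S_\chi(n), \qquad S_\chi(n) = \sum_{m \preceq n} \chi\!\left(\binom{n}{m}\right),
\]
which reduces everything to understanding the Gaussian-integer sums $S_\chi(n) \in \Z[i]$.

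The key structural input I would establish next is a Granville-type factorization: for a submask $m$ the residue $\binom{n}{m} \bmod 16$ is controlled by the maximal runs of consecutive $1$'s in the base-$2$ expansion of $n$, and from this I expect each $S_\chi$ to be block-multiplicative, $S_\chi(n) = \prod_b F_\chi(b)$ over those runs, with each factor $F_\chi(b) \in \Z[i]$ an explicit geometric-type sum in the block length $b$. Equivalently, this produces a linear representation expressing $a_{16,r}(n)$ as a matrix product over the binary digits of $n$, in the spirit of the $2$-regular framework. The trivial and order-$2$ characters reproduce the behaviour modulo $8$, which by Granville's theorem is always $0$ or a power of $2$; hence all genuinely new arithmetic --- and in particular every odd prime factor of $a_{16,r}(n)$ --- must originate in the order-$4$ characters, where $i$ enters.

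The organizing principle is that the seven displayed cases unify into the single assertion that if an odd prime $q$ divides $a_{16,r}(n)$ then so does $2^{\ord_q(4)}$: indeed $\ord_3(4)=1$, $\ord_5(4)=2$, $\ord_7(4)=3$, $\ord_{11}(4)=5$, $\ord_{13}(4)=6$, $\ord_{17}(4)=4$, and $\ord_{31}(4)=5$. This is precisely what the $\Z[i]$ picture predicts. Since $4 = -(1+i)^4$ and $(1+i)$ is the prime above $2$, the order of $4$ modulo $q$ is the natural invariant measuring how $q$ and powers of $(1+i)$ are forced to appear together inside the sums $F_\chi(b)$. Concretely, I would aim to show that $q \mid F_\chi(b)$ forces $\ord_q(4) \mid b$, and that such a block simultaneously contributes a factor $(1+i)^{2\,\ord_q(4)}$, i.e.\ a factor $2^{\ord_q(4)}$, to the $2$-part of $a_{16,r}(n)$.

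The hard part will be that $a_{16,r}(n)$ is a \emph{signed} average of the $S_\chi(n)$, weighted by $\overline{\chi(r)}$, so a valuation bound on each summand is not by itself enough: cancellation among the eight characters could in principle lower the $2$-adic valuation below $\ord_q(4)$. The crux is therefore to prove that no such cancellation occurs exactly when $q \mid a_{16,r}(n)$, which requires tracking both the $(1+i)$-valuation and the residue of the odd part simultaneously through the block product, uniformly over all $n$, all eight residues $r$, and (for a complete theorem rather than the seven listed primes) all odd primes $q$. Controlling the interference of several blocks, each possibly carrying its own $q$- and $(1+i)$-factors, and showing that the odd and $2$-parts stay locked together after summing over $\chi$, is where I expect the argument to be most delicate --- and is presumably why the statement has so far resisted proof and remains a conjecture.
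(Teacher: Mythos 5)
The paper does not prove this statement, and cannot be your benchmark here: it is Conjecture~1, the author states explicitly that both conjectures ``are empirical claims and are unproven,'' and the only evidence offered is exhaustive computation of rows up to $2^{20}$. So your proposal must stand on its own --- and by your own admission it does not: it is a research program whose load-bearing steps are all missing. Concretely: (i) the scalar factorization $S_\chi(n) = \prod_b F_\chi(b)$ over maximal runs of $1$'s is asserted, not proved, and in this form it is almost certainly too naive. Already modulo $8$ the residue of an odd $\binom{n}{m}$ is governed by joint digit-pattern counts of $m$ and $n-m$ across the whole expansion (this is Granville's analysis, which the paper notes was long and incomplete as first published); modulo $16$ one expects still longer-range correlations between blocks. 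What is available in general is only a transfer-matrix (linear) representation of $a_{16,r}(n)$ in the $2$-regular framework of Amdeberhan--Stanley, which the paper cites --- but its factors do not commute, so $2$-adic and $q$-adic bookkeeping does not reduce to independent per-block statements. (ii) You compute no candidate $F_\chi(b)$, so the central claim that $q \mid F_\chi(b)$ forces $\ord_q(4) \mid b$ while simultaneously contributing $(1+i)^{2\,\ord_q(4)}$ is unsupported. (iii) The cancellation among the eight characters, which you correctly flag as the crux, is not a gap \emph{in} the argument so much as the conjecture itself restated: ruling out that cancellation \emph{is} the theorem to be proved.

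That said, one ingredient of your proposal has real independent value. The unification of the seven cases into the single assertion ``if an odd prime $q$ divides $a_{16,r}(n)$ then $2^{\ord_q(4)}$ divides it'' checks out in every listed case ($\ord_3(4)=1$, $\ord_5(4)=2$, $\ord_7(4)=3$, $\ord_{11}(4)=5$, $\ord_{13}(4)=6$, $\ord_{17}(4)=4$, $\ord_{31}(4)=5$) and is consistent with the value set $\{0,1,2,4,6,8,12,16,20,24,\dots\}$ reported in the paper. Since the author writes that ``it is not clear what the corresponding powers of $2$ might be for larger primes,'' your $\ord_q(4)$ law is precisely the kind of ``more general and more compact conjecture'' the paper asks for, and the $\Z[i]$ heuristic ($4 = -(1+i)^4$, characters of $(\Z/16\Z)^\times$ valued in fourth roots of unity) gives a plausible reason why this invariant should appear. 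It is a testable sharpening worth recording --- but as a proof attempt, the proposal leaves the conjecture exactly as conjectural as the paper does.
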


Moreover, these eight primes are the only primes that appear up to row $2^{20}$.
Of course, there is a probably a more general and more compact conjecture waiting to be discovered.
However, it is not clear what the corresponding powers of $2$ might be for larger primes.

Granville's analysis of $a_{8,r}(n)$ was long (and in fact incomplete~\cite{Granville 1997 correction, Huard--Spearman--Williams mod 8}).
However, it should be possible to automate Granville's work in true Zeilberger fashion.
We do not undertake this work here, but once this is done and understood, the methodology will likely be applicable to $a_{16,r}(n)$, and perhaps the statements in the conjecture and the larger context surrounding them will become clear.
Hopefully a reader will take up this challenge!

\section{The value of $\binom{n}{m}$ modulo $n$}

We all know the feeling of anxiety caused by witnessing a calculus student apply the ``high school dream''\footnote{known as the ``freshman dream'' in countries such as the U.S. with lagging math education}
\[
	(a + b)^n = a^n + b^n,
\]
in which exponentiation distributes over addition.
But of course the high school dream is actually valid on occasion.
In particular, $(a + b)^p \equiv a^p + b^p \mod p$, since $\binom{p}{m} \equiv 0 \mod p$ for $1 \leq m \leq p-1$.

We might then ask:
How badly does the high school dream fail for non-primes?
What is $\binom{n}{m}$ modulo $n$, and how frequently is it nonzero?
With such an answer we can precisely quantify how upset at students we are entitled to get!

Rather than study $\binom{n}{m}$ modulo $n$ directly, we divide by $n$ and consider $\Frac(\frac{1}{n} \binom{n}{m})$, where $\Frac(x) = x - \lfloor x \rfloor$ is the fractional part of $x$.
We can recover the value of $\binom{n}{m}$ modulo $n$ from $\binom{n}{m} \equiv n \, \Frac(\frac{1}{n} \binom{n}{m}) \mod n$.
Figure~\ref{n choose m mod n} displays $\Frac(\frac{1}{n} \binom{n}{m})$ graphically.
The principal advantage of reformulating in this way is that for fixed $m \geq 1$ the sequence $\Frac(\frac{1}{n} \binom{n}{m})$ for $n \geq 1$ is periodic, which we soon prove.
For example, if $m = 4$ the sequence $\Frac(\frac{1}{n} \binom{n}{m})$ is
\[
	0, 0, 0, \frac{1}{4}, 0, \frac{1}{2}, 0, \frac{3}{4}, 0, 0, 0, \frac{1}{4}, 0, \frac{1}{2}, 0, \frac{3}{4}, 0, 0, 0, \frac{1}{4}, 0, \frac{1}{2}, 0, \frac{3}{4}, \dots.
\]
If $m = 5$ the sequence is
\[
	0, 0, 0, 0, \frac{1}{5}, 0, 0, 0, 0, \frac{1}{5}, 0, 0, 0, 0, \frac{1}{5}, 0, 0, 0, 0, \frac{1}{5}, 0, 0, 0, 0, \frac{1}{5}, \dots,
\]
and if $m = 6$ it is
\begin{align*}
	&0, 0, 0, 0, 0, \frac{1}{6}, 0, \frac{1}{2}, \frac{1}{3}, 0, 0, 0, 0, \frac{1}{2}, \frac{2}{3}, \frac{1}{2}, 0, \frac{1}{3}, 0, 0, 0, \frac{1}{2}, 0, \frac{1}{6}, \\
	&0, 0, \frac{1}{3}, 0, 0, \frac{1}{2}, 0, \frac{1}{2}, \frac{2}{3}, 0, 0, \frac{1}{3}, 0, \frac{1}{2}, 0, \frac{1}{2}, 0, \frac{2}{3}, 0, 0, \frac{1}{3}, \frac{1}{2}, 0, \frac{1}{2}, \\
	&0, 0, \frac{2}{3}, 0, 0, \frac{5}{6}, 0, \frac{1}{2}, 0, 0, 0, \frac{2}{3}, 0, \frac{1}{2}, \frac{1}{3}, \frac{1}{2}, 0, 0, 0, 0, \frac{2}{3}, \frac{1}{2}, 0, \frac{5}{6}, \dots
\end{align*}
with period length $72$.

The proof of periodicity follows from a result of Lu and Tsai~\cite{Lu--Tsai}.

\begin{figure}
	\includegraphics{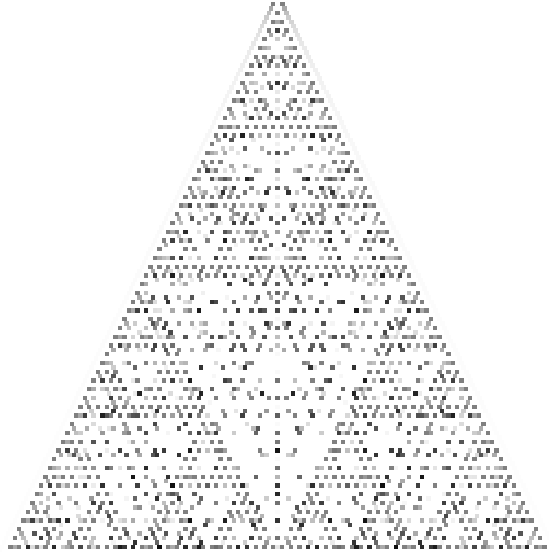}
	\caption{Rows $1$ through $2^7 - 1$ of Pascal's triangle, where $\binom{n}{m}$ is assigned the gray value between $0$ (white) and $1$ (black) corresponding to $\Frac(\frac{1}{n} \binom{n}{m})$.}
	\label{n choose m mod n}
\end{figure}

\begin{theorem*}[Lu and Tsai]
Let $k \geq 1$ and $m \geq 1$.
The sequence $\binom{n}{k}$ modulo $m$ for $n \geq 0$ is periodic with (minimal) period length
\[
	m \cdot \prod_{p \mid m} p^{\lfloor \log_p k \rfloor}.
\]
\end{theorem*}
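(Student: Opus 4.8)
The plan is to determine exactly which shifts $L$ are periods of $\binom{n}{k} \bmod m$ and then extract the smallest positive one. First I would apply Vandermonde's identity in the form
\[
	\binom{n+L}{k} = \sum_{j=0}^{k} \binom{L}{j}\binom{n}{k-j},
\]
so that $\binom{n+L}{k} - \binom{n}{k} = \sum_{j=1}^{k}\binom{L}{j}\binom{n}{k-j}$. I would then show that $L$ is a period, i.e.\ this difference vanishes modulo $m$ for every $n \ge 0$, if and only if $\binom{L}{j} \equiv 0 \mod m$ for all $1 \le j \le k$. The forward direction follows by applying the $i$-th finite difference $\Delta^i$ (where $(\Delta f)(n) = f(n+1)-f(n)$, so $\Delta\binom{n}{r} = \binom{n}{r-1}$) and evaluating at $n=0$: this isolates the single coefficient $\binom{L}{k-i}$ and exhibits it as an integer combination of values of the difference, hence a multiple of $m$. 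The reverse direction is immediate. Equivalently, one invokes that $\binom{n}{0},\dots,\binom{n}{k-1}$ is a $\Z$-basis for the integer-valued polynomials of degree below $k$.

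It then remains to compute the least $L \ge 1$ with $m \mid \binom{L}{j}$ for all $1 \le j \le k$, which I would do one prime at a time. Fix a prime $p \mid m$, write $a = v_p(m)$ for the $p$-adic valuation and $e = \lfloor \log_p k\rfloor$, and set $s = v_p(L)$. The crux is the identity
\[
	\min_{1 \le j \le k} v_p\binom{L}{j} = \max(0,\, s - e).
\]
For the lower bound, assume $s > e$ (otherwise the claim is vacuous) and fix any $1 \le j \le k$, so that $j < p^{e+1} \le p^s$. Then $j \bmod p^s$ and $(L-j)\bmod p^s$ are positive integers in $[0,p^s)$ summing to exactly $p^s$, since $p^s \mid L$. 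By Kummer's theorem this forces carries out of each of the positions $v_p(j), v_p(j)+1, \dots, s-1$ when $j$ and $L-j$ are added in base $p$, giving $v_p\binom{L}{j} \ge s - v_p(j) \ge s - e$, using $v_p(j) \le \lfloor\log_p j\rfloor \le e$. For the matching value I would exhibit an extremal $j$: counting carries directly shows $v_p\binom{L}{p^{e}} = s - e$ when $s > e$, and $v_p\binom{L}{p^{s}} = 0$ when $s \le e$ (both $j$ lie in range since $p^e \le k$).

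Combining these, $p^{a} \mid \binom{L}{j}$ for all $1 \le j \le k$ precisely when $s - e \ge a$, that is, when $v_p(L) \ge v_p(m) + \lfloor\log_p k\rfloor$. Since this must hold for every $p \mid m$ and imposes no condition on the remaining primes, the set of periods $L$ is exactly the set of positive multiples of
\[
	N = \prod_{p \mid m} p^{\,v_p(m) + \lfloor\log_p k\rfloor} = m \cdot \prod_{p \mid m} p^{\lfloor\log_p k\rfloor},
\]
and hence $N$ is the minimal period. The step I expect to be the main obstacle is the uniform lower bound in the valuation identity, namely establishing $v_p\binom{L}{j} \ge s - e$ simultaneously for \emph{all} $j \le k$ rather than merely for the extremal $j$; the carry-counting argument applied to the low $s$ base-$p$ digits is what makes this uniform bound work.
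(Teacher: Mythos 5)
Your proof is correct, but note that the paper itself contains no proof of this statement: it is quoted as a known theorem of Lu and Tsai, with the proof residing in their cited paper, so there is no internal argument to compare yours against. Taken on its own merits, your argument is complete and self-contained. The two pillars both check out: (i) the exact characterization of the periods via Vandermonde plus finite differences is valid --- applying $\Delta^i$ to $\sum_{j=1}^{k}\binom{L}{j}\binom{n}{k-j}$ and evaluating at $n=0$ does isolate $\binom{L}{k-i}$ for $0 \leq i \leq k-1$, since $\binom{0}{r}$ vanishes for $r \neq 0$, and $(\Delta^i g)(0)$ is an integer combination of $g(0),\dots,g(i)$, each divisible by $m$; and (ii) the valuation identity $\min_{1 \leq j \leq k} \nu_p\binom{L}{j} = \max(0, s-e)$ is right. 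I verified the carry count in both extremal cases: for $j = p^e$ with $s > e$, the digits of $L - p^e$ are $p-1$ in positions $e,\dots,s-1$, so adding $p^e$ produces carries exactly in those $s-e$ positions and the carry dies at position $s$ because the digit there is at most $p-2$; for $j = p^s$ with $s \leq e$, no carry occurs at all, giving valuation $0$. Your uniform lower bound (carries forced from position $\nu_p(j)$ up through $s-1$ because the truncations of $j$ and $L-j$ sum to exactly $p^s$) is the genuinely load-bearing step, as you anticipated, and it is sound. A pleasant feature of your route is that it yields more than the statement asks: the set of periods is shown to be exactly the set of positive multiples of $N = m \prod_{p \mid m} p^{\lfloor \log_p k \rfloor}$, from which both pure periodicity and minimality follow at once. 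The method is also consonant with the toolkit the surrounding paper uses elsewhere (Kummer's carry-counting theorem), so it fits the context well.
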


We now apply the Lu--Tsai result to $\binom{n}{m} \mod n$.

\begin{proposition*}
Let $m \geq 1$.
The sequence $\Frac(\frac{1}{n} \binom{n}{m})$ for $n \geq 1$ is periodic with period length
\[
	m \cdot \prod_{p \mid m} p^{\lfloor \log_p (m-1) \rfloor}.
\]
\end{proposition*}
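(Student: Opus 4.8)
The plan is to eliminate the variable modulus by means of the absorption identity $\binom{n}{m} = \tfrac{n}{m}\binom{n-1}{m-1}$ and then to invoke the Lu--Tsai theorem, whose modulus must be held fixed. First I would record that for $n \geq 1$ and $m \geq 1$,
\[
	\frac{1}{n}\binom{n}{m} = \frac{1}{m}\binom{n-1}{m-1},
\]
so that
\[
	\Frac\!\left(\frac{1}{n}\binom{n}{m}\right) = \Frac\!\left(\frac{1}{m}\binom{n-1}{m-1}\right) = \frac{1}{m}\left(\binom{n-1}{m-1} \bmod m\right),
\]
where $\bmod$ denotes the representative in $\{0, 1, \dots, m-1\}$, the last equality holding because $\binom{n-1}{m-1}$ is an integer. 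The gain is that the right-hand side is now governed by the modulus $m$, which no longer depends on $n$.

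Next I would reduce the periodicity question to one about $\binom{N}{m-1} \bmod m$ for $N \geq 0$. Writing $N = n-1$, our sequence is obtained from $\binom{N}{m-1} \bmod m$ by an index shift followed by the pointwise map $r \mapsto r/m$. An index shift leaves the minimal period of a purely periodic sequence unchanged, and $r \mapsto r/m$ is injective on $\{0, 1, \dots, m-1\}$, so applying it pointwise preserves the minimal period as well: a positive integer $L$ is a period of one sequence exactly when it is a period of the other. Hence $\Frac(\frac{1}{n}\binom{n}{m})$ has the same minimal period length as $\binom{N}{m-1} \bmod m$.

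Finally, for $m \geq 2$ we have $m-1 \geq 1$, and the Lu--Tsai theorem applies with their $k = m-1$ and modulus $m$, yielding minimal period length $m \cdot \prod_{p \mid m} p^{\lfloor \log_p (m-1) \rfloor}$, exactly as claimed. The case $m = 1$ is a trivial base case, since then $\Frac(\frac{1}{n}\binom{n}{1}) = \Frac(1) = 0$ is constant and the formula reduces to the empty product $1$.

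The main obstacle here is conceptual rather than computational: recognizing that the absorption identity is precisely what converts the self-referential modulus $n$, which also plays the role of the upper index, into the fixed modulus $m$ demanded by Lu--Tsai. Once that reduction is in place, the only point requiring genuine care is that the fractional-part encoding $r \mapsto r/m$ preserves the \emph{minimal} period and not merely some period, which is exactly the injectivity observation made above.
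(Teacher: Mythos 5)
Your proof is correct and follows essentially the same route as the paper's: the absorption identity $\frac{1}{n}\binom{n}{m} = \frac{1}{m}\binom{n-1}{m-1}$ followed by the Lu--Tsai theorem with $k = m-1$ and fixed modulus $m$. You are in fact slightly more careful than the paper, which silently elides the $m=1$ case (where Lu--Tsai's hypothesis $k \geq 1$ fails) and the observation that the injective encoding $r \mapsto r/m$ preserves the \emph{minimal} period rather than merely some period.
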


The factor $p^{\lfloor \log_p (m-1) \rfloor}$ can be interpreted as the largest power of $p$ that is strictly less than $m$.

\begin{proof}
We have
\[
	\frac{1}{n} \binom{n}{m} = \frac{1}{m} \binom{n-1}{m-1}.
\]
Taking the fractional part of both sides and letting $k = m-1$ in the previous theorem gives the desired result.
\end{proof}

In the remainder of the paper we aim to compute $\Frac(\frac{1}{n} \binom{n}{m})$ in two cases --- when $m = p$ and when $m = 2 p$.
The case when $m = p$ is fairly simple.
Let $\delta_S$ be $1$ if the statement $S$ is true and $0$ if $S$ is false.

\begin{proposition*}
If $p$ is a prime and $n \geq 1$, then
\[
	\Frac(\frac{1}{n} \binom{n}{p}) =
	\frac{1}{p} \delta_{p \mid n} =
	\begin{cases}
		0	& \text{if $p \nmid n$} \\
		1/p	& \text{if $p \mid n$.}
	\end{cases}
\]
\end{proposition*}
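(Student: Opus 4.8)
The plan is to reduce the fractional part to a residue computation modulo $p$ and then apply Lucas's theorem. First I would invoke the identity $\frac{1}{n}\binom{n}{p} = \frac{1}{p}\binom{n-1}{p-1}$ already recorded above (the $m = p$ case of $\frac{1}{n}\binom{n}{m} = \frac{1}{m}\binom{n-1}{m-1}$), so that the quantity in question becomes $\Frac(\frac{1}{p}\binom{n-1}{p-1})$. Since $\binom{n-1}{p-1}$ is an integer and $p \geq 2$, this fractional part is exactly $\frac{1}{p}\bigl(\binom{n-1}{p-1} \bmod p\bigr)$, where the residue is taken in $\{0, 1, \dots, p-1\}$. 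This reduces the proposition to determining $\binom{n-1}{p-1}$ modulo $p$.

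Next I would apply Lucas's theorem. The base-$p$ representation of $p-1$ is the single digit $p-1$ in the units place, with every higher digit equal to $0$. Writing the base-$p$ digits of $n-1$ as $(n-1)_i$, Lucas's theorem collapses the product $\prod_i \binom{(n-1)_i}{(p-1)_i}$ to the single nontrivial factor $\binom{(n-1)_0}{p-1}$, since every other factor is $\binom{(n-1)_i}{0} = 1$. Because $\binom{j}{p-1} = 0$ for $0 \le j < p-1$ and $\binom{p-1}{p-1} = 1$, this factor equals $1$ when $(n-1)_0 = p-1$ and $0$ otherwise. Translating the condition on the units digit back to a condition on $n$, we have $(n-1)_0 = p-1$ precisely when $n - 1 \equiv p - 1 \pmod p$, that is, when $p \mid n$. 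Hence $\binom{n-1}{p-1} \equiv \delta_{p \mid n} \pmod p$, and multiplying by $1/p$ yields the claimed value.

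There is no serious obstacle here; the proposition is a clean corollary of Lucas's theorem. The only point requiring mild care is that we need the \emph{exact} residue of $\binom{n-1}{p-1}$ modulo $p$, not merely whether $p$ divides it. This is why Lucas's theorem is the natural tool: it directly produces the value $1$ in the divisible case. By contrast, Kummer's theorem alone would only certify that $\binom{n-1}{p-1}$ is a unit modulo $p$ exactly when no borrow occurs (i.e.\ when $p \mid n$), which settles the vanishing case $p \nmid n$ but would leave open the precise numerator in the remaining case.
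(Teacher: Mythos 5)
Your proof is correct, but it is a genuinely different route --- a third proof, in effect, since neither of the paper's two proofs uses Lucas's theorem. The paper's first proof invokes its periodicity proposition (a consequence of the Lu--Tsai theorem) to conclude that $\Frac(\frac{1}{n} \binom{n}{p})$ has period $p$ in $n$, and then simply inspects $n = 1, \dots, p$, where $\binom{n}{p} = 0$ except for $\binom{p}{p} = 1$. Its second proof establishes the congruence $\binom{n}{p} \equiv \frac{n}{p} \, \delta_{p \mid n} \pmod{n}$ directly, splitting $n = p^\nu n'$ and using Anton's theorem to handle the modulus $p^\nu$. You share the paper's starting identity $\frac{1}{n} \binom{n}{p} = \frac{1}{p} \binom{n-1}{p-1}$, but you then read off the exact residue of $\binom{n-1}{p-1}$ modulo $p$ from Lucas's theorem: the base-$p$ representation of $p-1$ is the single digit $p-1$, so the product collapses to $\binom{(n-1)_0}{p-1} = \delta_{(n-1)_0 = p-1} = \delta_{p \mid n}$, and your observation that the exact residue (not mere divisibility) is what is needed is exactly right. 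As for what each approach buys: yours is the most self-contained, requiring only Lucas (already quoted in the paper's introduction) and neither the periodicity machinery nor Anton's theorem; the paper's first proof is the shortest given that the periodicity proposition has just been proved; and the paper's second proof is heavier but is deliberately structured as a template for the subsequent $\binom{n}{2p}$ proposition. Notably, your route would also handle that later proposition cleanly: $\frac{2}{n} \binom{n}{2p} = \frac{1}{p} \binom{n-1}{2p-1}$, and Lucas gives $\binom{n-1}{2p-1} \equiv (n-1)_1 \cdot \delta_{(n-1)_0 = p-1} \equiv (n_1 - 1) \, \delta_{p \mid n} \pmod{p}$, which suffices because after dividing by $n$ and doubling, the denominator is exactly $p$. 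One pedantic point: Lucas is quoted for $0 \leq m \leq n$, so for $1 \leq n \leq p-1$ you should either note $\binom{n-1}{p-1} = 0$ directly or adopt the standard convention $\binom{j}{k} = 0$ for $k > j$, under which your digit computation remains valid.
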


\begin{proof}[First proof]
By the previous proposition, the period length of $\Frac(\frac{1}{n} \binom{n}{p})$ is $p$, so the values of $\Frac(\frac{1}{n} \binom{n}{p})$ for $n \geq 1$ are determined by the values for $1 \leq n \leq p$.
For $1 \leq n \leq p-1$ we have $\binom{n}{p} = 0$, and $\binom{p}{p} = 1$.
\end{proof}

Therefore $\binom{n}{p}$ is zero modulo $n$ with frequency $\frac{p-1}{p}$.

We provide a second proof of this proposition that is more similar to the proof of the final proposition of the section.
We require two preliminary results.
The first follows immediately from Kummer's theorem.
Let $\nu_p(n)$ be the exponent of the highest power of $p$ dividing $n$.

\begin{lemma*}
If $p$ is a prime and $0 \leq m \leq n$, then $\nu_p(\binom{p n}{p m}) = \nu_p(\binom{n}{m})$.
\end{lemma*}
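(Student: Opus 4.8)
The plan is to invoke Kummer's theorem, which expresses $\nu_p\!\binom{N}{M}$ as the number of borrows occurring when $M$ is subtracted from $N$ in base $p$. First I would record the effect of multiplication by $p$ on base-$p$ representations. If the standard base-$p$ representations of $n$ and $m$ are $n_l n_{l-1} \cdots n_0$ and $m_l m_{l-1} \cdots m_0$, then since $pn = \sum_i n_i p^{i+1}$ (and likewise for $m$), the base-$p$ representation of $pn$ is obtained by shifting the digits one place to the left and inserting a $0$ in the units position; that is, $pn$ has representation $n_l n_{l-1} \cdots n_0 0$, and $pm$ has representation $m_l m_{l-1} \cdots m_0 0$.

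Next I would compare the two subtractions. In the units column of $pn - pm$ both digits are $0$, so this column requires no borrow and passes no borrow up to the next column. Every higher column of $pn - pm$ therefore reproduces exactly the corresponding column of $n - m$, digit for digit and borrow for borrow. Consequently the total number of borrows in computing $pn - pm$ equals the number of borrows in computing $n - m$, and Kummer's theorem yields
\[
	\nu_p\!\binom{pn}{pm} = \nu_p\!\binom{n}{m}.
\]

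There is essentially no serious obstacle here; the result is a one-line consequence of Kummer once the digit-shift picture is in place. The only point deserving explicit care is the claim that the inserted units column is inert, i.e.\ that appending matching zeros neither creates nor absorbs a borrow, which is immediate since $0 - 0 = 0$ demands no borrow. One could alternatively bypass Kummer's theorem and argue from a Lucas-type factorization of $\binom{pn}{pm}$, but the borrow-counting route is by far the cleanest and is exactly the ``immediate consequence of Kummer'' advertised in the statement.
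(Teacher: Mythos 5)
Your proof is correct and matches the paper exactly: the paper simply remarks that the lemma ``follows immediately from Kummer's theorem,'' and your borrow-counting argument (appending matching zero digits in the units column creates no new borrow and leaves all higher columns unchanged) is precisely that intended one-line deduction, spelled out. No gaps; the case $m=n$ and the digit-shift claim are both handled correctly.
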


The second is a result discovered by Anton in 1869 and rediscovered by both Stickelberger and Hensel that determines the value modulo $p$ of a binomial coefficient removed of its $p$ factors~\cite{Dickson I, Granville 1997}.
Recall that $n_l n_{l-1} \cdots n_1 n_0$ and $m_l m_{l-1} \cdots m_1 m_0$ are the representations of $n$ and $m$ in base $p$.
We write $(n-m)_i$ for the $i$th base-$p$ digit of $n-m$.

\begin{theorem*}[Anton]
Let $p$ be a prime, and let $0 \leq m \leq n$.
We have
\[
	\frac{1}{(-p)^{\nu_p(\binom{n}{m})}} \binom{n}{m} \equiv \prod_{i=0}^l \frac{n_i!}{m_i! (n-m)_i!} \mod p.
\]
\end{theorem*}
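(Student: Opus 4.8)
The plan is to reduce the statement to a single ``$p$-depleted factorial'' and to evaluate that object modulo $p$ via Wilson's theorem. For a nonnegative integer $n$, write $g(n) = n!/p^{\nu_p(n!)}$ for the factorial with all its factors of $p$ removed. Since $\nu_p(\binom{n}{m}) = \nu_p(n!) - \nu_p(m!) - \nu_p((n-m)!)$, dividing $\binom{n}{m}$ by the full power $p^{\nu_p(\binom{n}{m})}$ exactly clears the $p$'s from numerator and denominator, so
\[
	\frac{\binom{n}{m}}{p^{\nu_p(\binom{n}{m})}} = \frac{g(n)}{g(m)\, g(n-m)}.
\]
Thus it suffices to prove the single-variable congruence $g(n) \equiv (-1)^{\nu_p(n!)} \prod_i n_i! \mod p$ and then combine the instances for $n$, $m$, and $n-m$.

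To establish this congruence I would first separate multiples of $p$ from the rest. Grouping $1, 2, \ldots, n$ according to divisibility by $p$ gives
\[
	n! = \Big(\prod_{\substack{1 \le j \le n \\ p \nmid j}} j\Big) \cdot p^{\lfloor n/p \rfloor} \lfloor n/p \rfloor!,
\]
since the multiples of $p$ up to $n$ are $p, 2p, \ldots, \lfloor n/p\rfloor \, p$. Removing the power of $p$ from both sides yields the recursion $g(n) = P(n) \cdot g(\lfloor n/p\rfloor)$, where $P(n) = \prod_{1 \le j \le n,\, p \nmid j} j$ is the product of the integers up to $n$ coprime to $p$.

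The crux is to evaluate $P(n)$ modulo $p$. Partitioning $\{1, \ldots, n\}$ into $\lfloor n/p\rfloor$ complete blocks of $p$ consecutive integers followed by a final partial block of length $n_0 = n \bmod p$, each complete block contributes the residues $1, 2, \ldots, p-1$ and hence a factor $(p-1)! \equiv -1 \mod p$ by Wilson's theorem, while the partial block contributes $n_0! \mod p$. Therefore $P(n) \equiv (-1)^{\lfloor n/p\rfloor} n_0! \mod p$, and feeding this into the recursion and iterating gives
\[
	g(n) \equiv \prod_{i \ge 0} (-1)^{\lfloor n/p^{i+1}\rfloor} n_i! = (-1)^{\nu_p(n!)} \prod_i n_i! \mod p,
\]
where the exponent collapses by Legendre's formula $\nu_p(n!) = \sum_{i \ge 1}\lfloor n/p^i\rfloor$.

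Combining the three instances (and noting each $g$ value is a unit modulo $p$, so the quotient makes sense), the ratio $g(n)/(g(m)\, g(n-m))$ picks up a sign $(-1)^{\nu_p(n!) - \nu_p(m!) - \nu_p((n-m)!)} = (-1)^{\nu_p(\binom{n}{m})}$ together with the digit product $\prod_i n_i!/(m_i!\,(n-m)_i!)$. Since the left-hand side of the theorem carries $(-p)^{\nu_p(\binom{n}{m})}$ rather than $p^{\nu_p(\binom{n}{m})}$, this introduces a second factor of $(-1)^{\nu_p(\binom{n}{m})}$, and the two signs cancel to leave exactly the claimed product. The main thing to get right is the bookkeeping of signs: one must track the Wilson factor $-1$ through the recursion, recognize that it accumulates to precisely $(-1)^{\nu_p(n!)}$, and verify that this matches the $(-p)$ in the stated normalization so that the final answer is sign-free. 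The remaining steps --- the factorial factorization and the iteration --- are routine.
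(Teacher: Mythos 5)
The paper does not prove Anton's theorem at all---it is stated as a classical result (Anton 1869, rediscovered by Stickelberger and Hensel) with citations to Dickson and to Granville's survey---so there is no internal proof to compare against. Your argument is correct and is the standard proof found in that cited literature: the factorization $n! = p^{\lfloor n/p \rfloor} \lfloor n/p \rfloor! \prod_{1 \le j \le n,\, p \nmid j} j$ gives the recursion for the $p$-depleted factorial, Wilson's theorem evaluates each complete block as $-1$, Legendre's formula collapses the accumulated sign to $(-1)^{\nu_p(n!)}$, and the three signs combine with the $(-p)^{\nu_p(\binom{n}{m})}$ normalization exactly as you say (with the $p=2$ case harmless since $-1 \equiv 1 \bmod 2$).
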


Note that the arithmetic on the right side of the congruence takes place in $\Z/p\Z$, not $\R/p\Z$.

We are now ready to give the second proof.

\begin{proof}[Second proof of the proposition]
We show that $\binom{n}{p} \equiv \frac{n}{p} \delta_{p \mid n} \mod n$.

For any integer $m$, $\frac{m}{n} \binom{n}{m} = \binom{n-1}{m-1} \in \Z$, so in particular $p \binom{n}{p} \equiv 0 \mod n$.

If $p \nmid n$ then $p$ is invertible modulo $n$, so $\binom{n}{p} \equiv 0 \mod n$.

If $p \mid n$, let $\nu = \nu_p(n)$ and write $n = p^\nu n'$.
We show that $\binom{n}{p} \equiv \frac{n}{p} \mod n$.
Since $n' \mid \binom{n}{p}$ and $n' \mid \frac{n}{p}$, we have $\binom{n}{p} \equiv \frac{n}{p} \mod n'$.
It remains to show that $\binom{n}{p} \equiv \frac{n}{p} \mod p^\nu$.
By the lemma, $\nu_p(\binom{n}{p}) = \nu_p(\binom{n/p}{1}) = \nu_p(n) - 1$.
Write $\binom{n}{p} = p^{\nu - 1} b'$.
One checks that the ratio in Anton's theorem is
\[
	\frac{n_i!}{p_i! (n-p)_i!}
	\equiv
	\begin{cases}
		1	& \text{if $i = 0$} \\
		-1	& \text{if $1 \leq i < \nu$} \\
		n_\nu	& \text{if $i = \nu$} \\
		1	& \text{if $i > \nu$}
	\end{cases}
	\mod p.
\]
It follows that $b' \equiv n' \mod p$, so multiplying by $p^{\nu - 1}$ gives $\binom{n}{p} \equiv \frac{n}{p} \mod p^\nu$.
\end{proof}

When $m$ is not prime, the behavior of $\Frac(\frac{1}{n} \binom{n}{m})$ is more complex.
Our second conjecture concerns the case $m = 2p$.
The period length of $\Frac(\frac{1}{n} \binom{n}{2p})$ for $p \neq 2$ is $2^{\lfloor \log_2 (2p-1) \rfloor + 1} p^2$.
However, the conjecture states that we may specify these $2^{\lfloor \log_2 (2p-1) \rfloor + 1} p^2$ values in a more concise way than by listing them explicitly.
First we prove a result for $\Frac(\frac{2}{n} \binom{n}{2p})$.

\begin{proposition*}
If $p$ is a prime and $n \geq 1$, then
\[
	\Frac(\frac{2}{n} \binom{n}{2 p}) =
	\Frac(\frac{n_1 - 1}{p}) \delta_{p \mid n}.
\]
\end{proposition*}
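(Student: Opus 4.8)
The plan is to collapse the expression to a single binomial coefficient modulo $p$ and then read it off with Lucas's theorem. First I would invoke the identity $\frac{m}{n}\binom{n}{m} = \binom{n-1}{m-1}$ (already used in the second proof above) with $m = 2p$, giving
\[
	\frac{2}{n}\binom{n}{2p} = \frac{1}{p}\binom{n-1}{2p-1}.
\]
Since $\binom{n-1}{2p-1}$ is an integer, its fractional part after division by $p$ is governed entirely by its residue: if $r = \binom{n-1}{2p-1} \bmod p$ with $0 \le r < p$, then $\Frac(\frac{2}{n}\binom{n}{2p}) = r/p$. Thus the entire problem reduces to evaluating $\binom{n-1}{2p-1}$ modulo $p$.

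Next I would apply Lucas's theorem. The base-$p$ representation of $2p-1$ has digits $(2p-1)_0 = p-1$, $(2p-1)_1 = 1$, and $(2p-1)_i = 0$ for $i \ge 2$, so all but two factors are $\binom{(n-1)_i}{0} = 1$ and
\[
	\binom{n-1}{2p-1} \equiv \binom{(n-1)_0}{p-1}\binom{(n-1)_1}{1} \mod p.
\]
The factor $\binom{(n-1)_0}{p-1}$ equals $1$ when $(n-1)_0 = p-1$ and $0$ otherwise. Because $(n-1)_0 = p-1$ is equivalent to $n \equiv 0 \mod p$, this factor is exactly $\delta_{p \mid n}$, which reproduces the indicator in the claimed formula; the other factor is simply $\binom{(n-1)_1}{1} = (n-1)_1$.

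The main obstacle is the final identification of $(n-1)_1$ with $n_1$ in the case $p \mid n$, where a carry must be tracked. When $p \mid n$ we have $n_0 = 0$, so subtracting $1$ forces a borrow out of the units place: if $n_1 \ge 1$ the borrow halts and $(n-1)_1 = n_1 - 1$, whereas if $n_1 = 0$ the borrow cascades and $(n-1)_1 = p-1$. In either situation $(n-1)_1 \equiv n_1 - 1 \mod p$, which is precisely the congruence required. Assembling the factors yields $\binom{n-1}{2p-1} \equiv (n_1 - 1)\,\delta_{p \mid n} \mod p$, and dividing by $p$ and taking fractional parts gives $\Frac(\frac{2}{n}\binom{n}{2p}) = \Frac(\frac{n_1 - 1}{p})\,\delta_{p \mid n}$. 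I would close by remarking that no separate argument is needed for small $n$: when $n < 2p$ the digit comparison in Lucas's theorem already returns $0$ in agreement with the vanishing of $\binom{n}{2p}$, so the derivation is uniform in $n \ge 1$.
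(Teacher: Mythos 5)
Your proof is correct, and it takes a genuinely different route from the paper's. You use the absorption identity to rewrite $\frac{2}{n}\binom{n}{2p}$ as $\frac{1}{p}\binom{n-1}{2p-1}$, which immediately shows the fractional part has denominator dividing $p$ and is determined by a single residue modulo $p$; Lucas's theorem then reads that residue off from the base-$p$ digits of $2p-1$, namely $(2p-1)_1 = 1$ and $(2p-1)_0 = p-1$, and a one-step borrow analysis gives $(n-1)_1 \equiv n_1 - 1 \bmod p$ exactly when $p \mid n$. The paper instead proves the equivalent congruence $2\binom{n}{2p} \equiv \frac{n}{p}\left(\frac{n}{p}-1\right) \bmod n$ directly: it splits modulo $n' = n/p^{\nu}$ and modulo $p^{\nu}$, computes the exact $p$-adic valuation $\beta$ via the Kummer-based lemma, evaluates the unit part with Anton's theorem, runs a three-way case analysis on $n_1 \geq 2$, $n_1 = 1$, $n_1 = 0$, and checks $p = 2$ separately by periodicity. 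Your argument is shorter and uniform in both $p$ (the digit computation $2p-1 = 1\cdot p + (p-1)$ is valid even for $p=2$, so no separate case is needed) and $n$ (the $n < 2p$ degeneracy is absorbed into Lucas's conventions, as you note). What the paper's heavier machinery buys is information your method cannot see: Anton's theorem delivers congruences modulo higher powers $p^{\beta+1}$ and an explicit representative $\frac{n}{p}\left(\frac{n}{p}-1\right)$ modulo $n$, and in particular retains $2$-adic information about $\frac{2}{n}\binom{n}{2p}$ --- precisely the parity data that drives the ``exceptional'' classes of Conjecture~2, whereas reduction modulo $p$ via Lucas discards it. For the proposition as stated, however, your Lucas-based proof is complete and cleaner.
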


For example, if $p = 3$ then the sequence $\Frac(\frac{2}{n} \binom{n}{2p})$ is
\[
	0, 0, 0, 0, 0, \frac{1}{3}, 0, 0, \frac{2}{3}, 0, 0, 0, 0, 0, \frac{1}{3}, 0, 0, \frac{2}{3}, 0, 0, 0, 0, 0, \frac{1}{3}, 0, 0, \frac{2}{3}, \dots.
\]

\begin{proof}
As in the proof of the previous proposition, $\frac{2 p}{n} \binom{n}{2 p} \in \Z$, so $2 p \binom{n}{2 p} \equiv 0 \mod n$.

First consider $p = 2$.
The period length of $\Frac(\frac{2}{n} \binom{n}{4})$ is $4$.
The period length of $\Frac(\frac{n_1 - 1}{2}) \delta_{2 \mid n}$ is also $4$, so one checks that the first $4$ values of the two sequences agree.

Let $p \neq 2$.

If $p \nmid n$ then $2 \binom{n}{2 p} \equiv 0 \mod n$.

If $p \mid n$, let $\nu = \nu_p(n)$ and write $n = p^\nu n'$.
We show that $2 \binom{n}{2 p} \equiv \frac{n}{p} (\frac{n}{p} - 1) \mod n$.
Since $2 \binom{n}{2 p} \equiv 0 \mod n'$ and $\frac{n}{p} (\frac{n}{p} - 1) \equiv 0 \mod n'$, it suffices to show that $2 \binom{n}{2 p} \equiv \frac{n}{p} (\frac{n}{p} - 1) \mod p^\nu$.
Let $\beta = \nu_p(2 \binom{n}{2 p})$, and write $2 \binom{n}{2 p} = p^\beta b'$.
By the lemma, $\beta = \nu_p(\frac{n}{p} (\frac{n}{p} - 1)) = \nu - 1 + \nu_p(\frac{n}{p} - 1)$.
We consider two cases.
If $n_1 \geq 2$, the ratio in Anton's theorem is
\[
	\frac{n_i!}{(2p)_i! (n-2p)_i!}
	\equiv
	\begin{cases}
		1						& \text{if $i = 0$} \\
		\frac{1}{2} \frac{n}{p} (\frac{n}{p} - 1)	& \text{if $i = 1$} \\
		1						& \text{if $i > 1$}
	\end{cases}
	\mod p;
\]
in this case $\nu = 1$ and $\beta = 0$, so
\[
	2 \binom{n}{2p} \equiv \frac{n}{p} \left(\frac{n}{p} - 1\right) \mod p
\]
as desired.
Alternatively, if $n_1 < 2$, the ratio is
\[
	\frac{n_i!}{(2p)_i! (n-2p)_i!}
	\equiv
	\begin{cases}
		1				& \text{if $i = 0$} \\
		\frac{1}{2! (n_1 + p - 2)!}	& \text{if $i = 1$} \\
		-1				& \text{if $2 \leq i \leq \beta$} \\
		\lfloor \frac{n}{p^i} \rfloor	& \text{if $i = \beta + 1$} \\
		1				& \text{if $i > \beta + 1$}
	\end{cases}
	\mod p,
\]
so (since $(n_1 + p - 2)! \equiv (-1)^{n_1} \mod p$)
\[
	2 \binom{n}{2p} \equiv -(-1)^{n_1} p^\beta \left\lfloor \frac{n}{p^{\beta + 1}} \right\rfloor \mod p^{\beta + 1}.
\]
Now if $n_1 = 1$ then $\beta = \nu_p(n-p) - 1$, so this becomes
\[
	2 \binom{n}{2p}
	\equiv p^{\nu_p(n-p) - 1} \left\lfloor \frac{n}{p^{\nu_p(n-p)}} \right\rfloor
	= \frac{n-p}{p}
	\equiv \frac{n}{p} \left(\frac{n}{p} - 1\right)
	\mod p^{\nu_p(n-p)},
\]
which is sufficient since $\nu_p(n-p) > 1 = \nu_p(n) $.
If on the other hand $n_1 = 0$ then $\beta = \nu_p(n) - 1$, so we have
\[
	2 \binom{n}{2p}
	\equiv -p^{\nu_p(n) - 1} \left\lfloor \frac{n}{p^{\nu_p(n)}} \right\rfloor
	= -\frac{n}{p}
	\equiv \frac{n}{p} \left(\frac{n}{p} - 1\right)
	\mod p^{\nu_p(n)}.
	\qedhere
\]
\end{proof}

Since $\Frac(\frac{2}{n} \binom{n}{2 p}) = \Frac(\frac{n_1 - 1}{p}) \delta_{p \mid n}$, it follows that, for a given $n$ and $p$, $\Frac(\frac{1}{n} \binom{n}{2 p})$ is either $\Frac(\frac{n_1 - 1}{2 p} \delta_{p \mid n})$ or $\Frac(\frac{n_1 - 1}{2 p} \delta_{p \mid n} + \frac{1}{2})$, depending on whether $\frac{2}{n} \binom{n}{2 p} - \frac{n_1 - 1}{p} \delta_{p \mid n}$ is even or odd.
For example, if $p = 3$ then $\frac{2}{n} \binom{n}{2 p} - \frac{n_1 - 1}{p} \delta_{p \mid n}$ is even unless $n$ is congruent to one of
\[
	8, 9, 14, 15, 16, 18, 22, 27, 30, 32, 33, 36, 38, 40, 42, 45, 46, 48, 51, 56, 60, 62, 63, 64, 69, 70
\]
modulo $72$.

However, we can significantly simplify this condition by rewriting the previous proposition instead as $\Frac(\frac{2}{n} \binom{n}{2 p}) = \Frac(\frac{(p + 1) (n_1 - 1)}{p}) \delta_{p \mid n}$.
Now $\Frac(\frac{1}{n} \binom{n}{2 p})$ is either $\Frac(\frac{(p + 1) (n_1 - 1)}{p} \delta_{p \mid n})$ or $\Frac(\frac{(p + 1) (n_1 - 1)}{p} \delta_{p \mid n} + \frac{1}{2})$, depending on whether $\frac{2}{n} \binom{n}{2 p} - \frac{(p + 1) (n_1 - 1)}{p} \delta_{p \mid n}$ is even or odd.
Taking $p = 3$ again, $\frac{2}{n} \binom{n}{2 p} - \frac{(p + 1) (n_1 - 1)}{p} \delta_{p \mid n}$ is even unless $n$ is congruent to one of $6, 8$ modulo $8$.

The following conjecture claims that we can determine when this second criterion holds --- and thereby compute $\Frac(\frac{1}{n} \binom{n}{2p})$ --- by examining the binary representation of $p$.
Let $\Mod(a,b)$ be the number $x \equiv a \mod b$ such that $0 \leq x < b$.
By `$|m|_0$' we mean the number of zeros in the standard binary representation of $m$ (with no leading zeros).
Note however that `$n_1$' still refers to the base-$p$ representation of $n$.

\begin{conjecture}
Let $p \neq 2$ be a prime, let $m = 2p$, and let $m_l m_{l-1} \cdots m_1 m_0$ be the standard base-$2$ representation of $m$.
Let $n \geq 1$.
Then
\[
	\frac{2}{n} \binom{n}{2 p} - \frac{(p + 1) (n_1 - 1)}{p} \delta_{p \mid n}
\]
is odd if and only if
\[
	n \equiv 2 p + \sum_{i=0}^l \left\lfloor \frac{j}{2^{|\Mod(2p,2^{i+1})|_0 - \delta_{i=1}}} \right\rfloor m_i 2^i	\mod 2^{\lfloor \log_2 (2p) \rfloor + 1}
\]
for some $0 \leq j \leq 2^{|2p|_0} - 1$.
\end{conjecture}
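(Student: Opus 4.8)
The plan is to strip away the denominators and fractional parts so that the parity assertion reduces to the parity of a single ordinary binomial coefficient modulo $2$, and then to translate Lucas's and Kummer's theorems into the combinatorial condition on $n$ stated in the conjecture. First I would clear denominators. Write $D = \frac{2}{n}\binom{n}{2p} - \frac{(p+1)(n_1-1)}{p}\delta_{p\mid n}$; the rewritten form of the preceding proposition on $\Frac(\frac{2}{n}\binom{n}{2p})$ guarantees $D \in \Z$, since $\frac{(p+1)(n_1-1)}{p} = (n_1-1) + \frac{n_1-1}{p}$ has the same fractional part as $\frac{n_1-1}{p}$. Using the identity $\frac{2p}{n}\binom{n}{2p} = \binom{n-1}{2p-1}$, I multiply through by $p$ to obtain
\[
	\binom{n-1}{2p-1} = p D + (p+1)(n_1-1)\delta_{p\mid n}.
\]
Because $p$ is odd, reducing modulo $2$ kills the term $(p+1)(n_1-1)\delta_{p\mid n}$ and leaves $D \equiv \binom{n-1}{2p-1} \pmod 2$. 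Thus $D$ is odd exactly when $\binom{n-1}{2p-1}$ is odd, and the problem becomes one purely about Pascal's triangle modulo $2$.

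Next I would invoke Lucas's theorem (equivalently Kummer's theorem) modulo $2$: $\binom{n-1}{2p-1}$ is odd precisely when every binary digit of $2p-1$ is at most the corresponding digit of $n-1$, i.e.\ when $2p-1$ is a binary submask of $n-1$. Since all nonzero bits of $2p-1$ lie below position $s \coloneqq \lfloor \log_2(2p)\rfloor + 1$, this condition depends only on $n$ modulo $2^s$, which is exactly the modulus in the conjecture. Equivalently the condition is $n \equiv 2p + t \pmod{2^s}$, where $t$ ranges over the submasks of the complement of $2p-1$ within its lowest $s$ bits. The set of such $t$ has size $2^{|2p|_0}$, since $2p$ and $2p-1$ have equally many one-bits (the trailing block $10$ of $2p$ becomes $01$ in $2p-1$), hence equally many zero-bits.

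The main obstacle is the final step: proving that the explicit parametrization in the conjecture enumerates exactly this set of submasks. Let $z_0 < z_1 < \cdots$ be the zero-positions of $2p-1$ (the free bits, of which there are $|2p|_0$), so that writing $j = \sum_k j_k 2^k$ in binary deposits the submask $t = \sum_k j_k 2^{z_k}$. I would prove a bit-spreading lemma asserting that
\[
	2p + \sum_{i=0}^l \left\lfloor \frac{j}{2^{|\Mod(2p,2^{i+1})|_0 - \delta_{i=1}}}\right\rfloor m_i 2^i \equiv 2p + \sum_k j_k 2^{z_k} \pmod{2^s}
\]
for each $0 \le j \le 2^{|2p|_0}-1$. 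Here the exponent $|\Mod(2p,2^{i+1})|_0 - \delta_{i=1}$ counts the free positions strictly below $i$, so $\lfloor j/2^{(\cdots)}\rfloor$ selects exactly the bits of $j$ not yet consumed by lower positions; the restriction to the one-bits $m_i$ of $2p$, together with the correction $\delta_{i=1}$, encodes the carries generated when the trailing $01$ of $2p-1$ is incremented to the trailing $10$ of $2p$. I expect to establish this by induction on the bit positions, tracking carries, or alternatively by showing that $j \mapsto 2p + (\cdots) \bmod 2^s$ is injective and lands inside the submask set, whence equality follows from the cardinality count $2^{|2p|_0}$ of the previous step. Getting this carry bookkeeping exactly right—reconciling a sum indexed by the one-positions of $2p$ with a mask supported on the zero-positions of $2p-1$—is the crux of the argument.
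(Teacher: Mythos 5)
You should know at the outset that the paper contains no proof of this statement: it is Conjecture~2, and the author states explicitly that both conjectures ``are empirical claims and are unproven!'' So there is no paper proof to compare against, and your attempt must be judged on its own. Its first half is correct and genuinely clarifying. Integrality of $D = \frac{2}{n}\binom{n}{2p} - \frac{(p+1)(n_1-1)}{p}\delta_{p\mid n}$ does follow from the rewritten form of the paper's proposition, and multiplying by $p$ and using $\frac{2p}{n}\binom{n}{2p} = \binom{n-1}{2p-1}$ gives $pD = \binom{n-1}{2p-1} - (p+1)(n_1-1)\delta_{p\mid n}$; since $p$ is odd and $p+1$ is even, $D \equiv \binom{n-1}{2p-1} \pmod 2$. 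Lucas's theorem modulo $2$ then characterizes the exceptional $n$ as $n \equiv 2p + t \pmod{2^s}$, $s = \lfloor\log_2(2p)\rfloor + 1$, with $t$ ranging over submasks of the complement of $2p-1$ in its low $s$ bits; your cardinality count is right because $2p$ and $2p-1$ differ only by the swap of a trailing $10$ for $01$, so $|2p-1|_0 = |2p|_0$. This reduction is consistent with the paper's data: for $p=3$ it yields exactly the exceptional residues $6, 8 \bmod 8$ reported there, and for $p = 5, 11, 13$ the conjecture's $j$-parametrization does reproduce the submask cosets.

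The genuine gap is the step you yourself flag as the crux: the ``bit-spreading lemma''
\[
	2p + \sum_{i=0}^l \left\lfloor \frac{j}{2^{|\Mod(2p,2^{i+1})|_0 - \delta_{i=1}}}\right\rfloor m_i 2^i \equiv 2p + \sum_k j_k 2^{z_k} \pmod{2^s}
\]
is asserted with a plan, not proved, and it is not a routine verification. The identity does not hold term by term; it holds only after carries propagate. For instance, with $p = 11$ ($2p = 10110_2$, free positions $z_0 = 1$, $z_1 = 3$) and $j = 2$, the terms at $i = 1$ and $i = 2$ contribute $4$ each, and the deposited bit $8 = 2^{z_1}$ arises only from their sum --- so an induction must track exactly how the truncated-and-shifted copies of $j$ interact, which is the entire content of the conjecture's peculiar exponent $|\Mod(2p,2^{i+1})|_0 - \delta_{i=1}$. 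Your fallback strategy (injectivity plus the cardinality count) does not sidestep this: injectivity of $j \mapsto 2p + \sum_i \lfloor j/2^{(\cdots)}\rfloor m_i 2^i$ alone does not show the image lands inside the submask coset set, and proving containment is essentially the lemma again. In short, you have produced a valuable and apparently correct reduction of Conjecture~2 to a self-contained carry-propagation identity about binary representations --- arguably progress beyond what the paper records --- but the proposal is not a proof, and the statement remains open exactly at the point you identified.
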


Letting $\textnormal{exceptional}_{2p}(n)$ be the statement that $\frac{2}{n} \binom{n}{2 p} - \frac{(p + 1) (n_1 - 1)}{p} \delta_{p \mid n}$ is odd, it follows from the conjecture that
\[
	\Frac(\frac{1}{n} \binom{n}{2p}) = \Frac(\frac{(p + 1) (n_1 - 1)}{2 p} \, \delta_{p \mid n} + \frac{1}{2} \, \delta_{\textnormal{exceptional}_{2p}(n)}).
\]

Let us compute some examples of the exceptional residue classes given by the conjecture.
Let $p = 3$.
For $j = 0$ we obtain
\begin{align*}
	n
	& \equiv 6 + \sum_{i=0}^2 \left\lfloor \frac{0}{2^{|\Mod(6,2^{i+1})|_0 - \delta_{i=1}}} \right\rfloor m_i 2^i	\mod 8 \\
	& = 6 + 0 \cdot 0 \cdot 2^0 + 0 \cdot 1 \cdot 2^1 + 0 \cdot 1 \cdot 2^2 \\
	& = 6.
\end{align*}
For $j = 1$ the exceptional residue class is
\begin{align*}
	n
	& \equiv 6 + \sum_{i=0}^2 \left\lfloor \frac{1}{2^{|\Mod(6,2^{i+1})|_0 - \delta_{i=1}}} \right\rfloor m_i 2^i	\mod 8 \\
	& = 6 + \left\lfloor \frac{1}{2^{|0|_0}} \right\rfloor \cdot 0 \cdot 2^0 + \left\lfloor \frac{1}{2^{|2|_0 - 1}} \right\rfloor \cdot 1 \cdot 2^1 + \left\lfloor \frac{1}{2^{|6|_0}} \right\rfloor \cdot 1 \cdot 2^2 \\
	& = 6 + \lfloor 1 \rfloor \cdot 0 \cdot 2^0 + \lfloor 1 \rfloor \cdot 1 \cdot 2^1 + \left\lfloor \frac{1}{2} \right\rfloor \cdot 1 \cdot 2^2 \\
	& = 8.
\end{align*}


In general, one can think of the sum in the conjecture as performing an operation similar to the product $\sum_{i=0}^l j m_i 2^i$ of $2p$ and $j$ in base $2$.
For example, let $p = 173$ and $j = 13$.  The product $2 p \cdot j$ is computed as follows using the standard multiplication algorithm.
\begin{align*}
	101011010_2 \\
	\underline{\times \hspace{1.8cm} 1101_2} \\
	1101\phantom{0_2} \\
	1101\phantom{000_2} \\
	1101\phantom{0000_2} \\
	1101\phantom{000000_2} \\
	\underline{+ \;\, 1101\phantom{00000000_2}} \\
	1000110010010_2
\end{align*}
To compute the expression
\[
	\sum_{i=0}^l \left\lfloor \frac{j}{2^{|\Mod(2p,2^{i+1})|_0 - \delta_{i=1}}} \right\rfloor m_i 2^i
\]
we truncate and shift each instance of $j$ as follows.
\begin{align*}
	101011010_2 \\
	\underline{\divideontimes \hspace{1.8cm} 1101_2} \\
	1101\phantom{0_2} \\
	11\cancel{01}\phantom{0_2} \\
	11\cancel{01}\phantom{00_2} \\
	1\cancel{101}\phantom{000_2} \\
	\underline{+ \;\, \cancel{1101}\phantom{0000_2}} \\
	10100010_2
\end{align*}

As with Conjecture~1, a proof of Conjecture~2 will undoubtedly provide more context for the result and will perhaps reveal a more natural formulation.
We have also not addressed the computational feasibility of computing $\Frac(\frac{1}{n} \binom{n}{2p})$ using Conjecture~2.
In particular, there is no obvious way of quickly determining whether there is an appropriate $j$ establishing that $n$ belongs to an exceptional residue class.

However, Conjecture~2 is suggestive in its departure from traditional results for prime power moduli.
The immediate question is whether it can be generalized to $\Frac(\frac{1}{n} \binom{n}{p q})$.
The author does not know the answer.

\section*{Acknowledgement}

I would like to thank Andrew Granville for several discussions about the arithmetic of binomial coefficients.

\end{document}